\newcommand{\half}{\mbox{$\frac{1}{2}$}}
\newcommand{\R}{\mathbb{R}}
\newcommand{\DOT}{\pmb{\cdot}}
\newcommand{\cycsum}{\sum_{\pmb{\circlearrowright}}}
\newtheorem{thm}{Theorem}
\newtheorem{lem}{Lemma}
\theoremstyle{definition}
\title{A Simple Vector Proof of Feuerbach's Theorem}
\author{Michael Scheer}
\date{}
\begin{document} 
\maketitle

\begin{abstract}
The celebrated theorem of Feuerbach states that the nine-point circle of a nonequilateral triangle is tangent to both its incircle and its three excircles. In this note, we give a simple proof of Feuerbach's Theorem using straightforward vector computations. All required preliminaries are proven here for the sake of completeness.
\end{abstract}

\section{Notation and Background}

Let $\triangle ABC$ be a nonequilateral triangle. We denote its side-lengths by $a,b,c$, its semiperimeter by $s = \half (a + b + c)$, and its area by $K$. Its {\em classical centers} are the circumcenter $O$, the incenter $I$, the centroid $G$, and the orthocenter $H$ (Figure \ref{HGIO}). The nine-point center $N$ is the midpoint of $OH$ and the center of the nine-point circle, which passes through the side-midpoints $A',B',C'$ and the feet of the three altitudes. The Euler Line Theorem states that $G$ lies on $OH$ with $OG : GH = 1 : 2$. We write $E_a,E_b,E_c$ for the excenters opposite $A,B,C$, respectively; these are points where one internal angle bisector meets two external angle bisectors. Like $I$, the points $E_a,E_b,E_c$ are equidistant from the lines $AB$, $BC$, and $CA$, and thus center three circles each of which is tangent to those lines. These are the excircles, pictured in Figure \ref{excenters}. The {\em classical radii} are the circumradius $R$ ($= |OA| = |OB| = |OC|$), the inradius $r$, and the exradii $r_a,r_b,r_c$. The following area formulas are well known (see, e.g., \cite{C} and \cite{CG}):
\[ K = \frac{abc}{4R}=rs=r_a(s-a)=\sqrt{s(s-a)(s-b)(s-c)}. \]
Feuerbach's Theorem states that {\em the incircle is internally tangent to the nine-point circle, while the excircles are externally tangent to it} \cite{F}. Two of the four points of tangency can be seen in Figure \ref{excenters}.

\section{Vector Formalism}

We view the plane as $\R^2$ with its standard vector space structure. Given $\triangle ABC$, the vectors $A - C$ and $B - C$ are linearly independent. Thus for any point $X$, we may write $X - C = \alpha(A - C) + \beta(B - C)$ for unique $\alpha, \beta \in \R$. Defining $\gamma = 1 - \alpha - \beta$, we find that
\[ X = \alpha A + \beta B + \gamma C, \;\;\;\;\;\; \alpha + \beta + \gamma = 1. \]
This expression for $X$ is unique. One says that $X$ has {\em barycentric coordinates} $(\alpha, \beta, \gamma)$ with respect to $\triangle ABC$ (see, e.g., \cite{C}). The barycentric coordinates are particularly simple when $X$ lies on a side of $\triangle ABC$:

\begin{thm}\label{XonBC} Let $X$ lie on side $BC$ of $\triangle ABC$. Then, with respect to $\triangle ABC$, $X$ has barycentric coordinates $(0,|CX|/a,|BX|/a)$. \end{thm}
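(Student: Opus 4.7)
The plan is to parametrize the segment $BC$ and then invoke uniqueness of barycentric coordinates. Since $X$ lies on side $BC$, there is a unique $t \in [0,1]$ such that $X = B + t(C - B)$, and by construction $t = |BX|/|BC| = |BX|/a$ while $1-t = |CX|/a$.

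Expanding, I would rewrite this as
\[ X = 0 \cdot A + (1-t)\,B + t\,C = 0 \cdot A + \frac{|CX|}{a}\,B + \frac{|BX|}{a}\,C. \]
The coefficients sum to $0 + |CX|/a + |BX|/a = (|CX| + |BX|)/a = a/a = 1$, since $X$ lies between $B$ and $C$. Hence the triple $(0, |CX|/a, |BX|/a)$ is an affine combination of $A,B,C$ equal to $X$.

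By the uniqueness statement established in the paragraph preceding the theorem (every point $X$ has a unique barycentric representation with respect to $\triangle ABC$), this triple must be the barycentric coordinate vector of $X$.

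There is essentially no obstacle here: the only subtle point is justifying the additivity $|BX| + |CX| = a$, which is exactly the hypothesis that $X$ lies on the segment $BC$ rather than on the extended line. Once that is noted, the proof is a one-line identification of coefficients.
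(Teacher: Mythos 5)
Your proposal is correct and follows essentially the same route as the paper: parametrize the segment as $X = B + t(C-B)$, identify $t = |BX|/a$ by taking norms, rewrite $X$ as an affine combination of $A,B,C$, and conclude by uniqueness of barycentric coordinates. The paper likewise obtains $1 - t = (a - |BX|)/a = |CX|/a$ from the additivity $|BX| + |CX| = a$, so even the one subtle point you flag is handled identically.
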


\begin{proof} Since $X$ lies on line $BC$ between $B$ and $C$, there is a unique scalar $t$ such that $X - B = t(C - B)$ and $0 < t < 1$. Taking norms and using $t > 0$, we find $|BX| = |t||BC| = ta$, i.e., $t = |BX|/a$. Rearranging, $X = 0A + (1 - t)B + tC$, in which the coefficients sum to $1$. Finally, $1 - t = (a - |BX|)/a = |CX|/a$. \end{proof}

\begin{figure}[h!]
\caption{The classical centers and the Euler division $OG : GH = 1 : 2$.}
\begin{center}
\includegraphics[scale=.375]{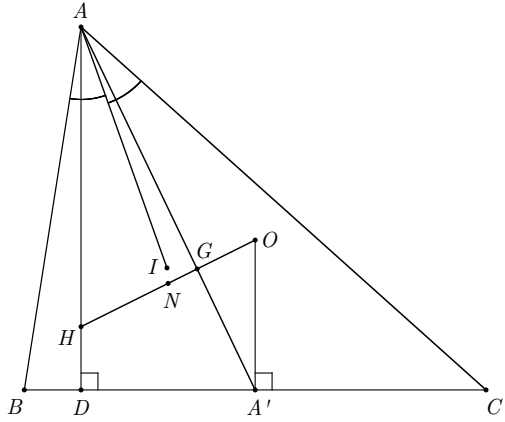}
\end{center}
\label{HGIO}
\end{figure}

\begin{figure}[h]
\caption{The excenter $E_a$ and $A$-excircle; Feuerbach's theorem.}
\begin{center}
\includegraphics[width=60mm]{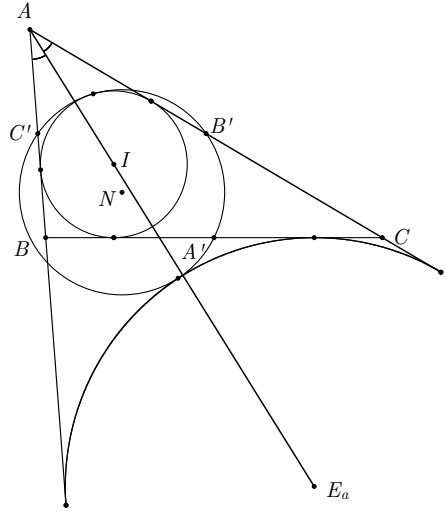}
\end{center}
\label{excenters}
\end{figure}

The next theorem reduces the computation of a distance $|XY|$ to the simpler distances $|AY|$, $|BY|$, and $|CY|$, when $X$ has known barycentric coordinates.

\begin{thm}\label{distance}
Let $X$ have barycentric coordinates $(\alpha, \beta, \gamma)$ with respect to $\triangle ABC$. Then for any point $Y$, 
\[ {|XY|}^2 = \alpha {|AY|}^2 + \beta {|BY|}^2 + \gamma {|CY|}^2 - (\beta \gamma a^2 + \gamma \alpha b^2 + \alpha \beta c^2).  \eqno{(*)} \]
\end{thm}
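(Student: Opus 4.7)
The plan is to exploit the constraint $\alpha + \beta + \gamma = 1$ to rewrite the difference $X - Y$ symmetrically, and then expand its squared Euclidean norm. Since $X = \alpha A + \beta B + \gamma C$ with coefficients summing to $1$, the identity
\[ X - Y = \alpha(A - Y) + \beta(B - Y) + \gamma(C - Y) \]
holds automatically (the coefficients of $Y$ on the right sum to $1$, so the $Y$ terms collapse correctly). This is the key structural observation; once it is in place, the rest is bookkeeping.

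Next, I would take the squared norm of both sides using the standard inner product on $\R^2$. The expansion produces three ``diagonal'' terms $\alpha^2|AY|^2$, $\beta^2|BY|^2$, $\gamma^2|CY|^2$, together with three ``mixed'' terms of the form $2\alpha\beta(A-Y)\DOT(B-Y)$ and its two cyclic analogues. To eliminate the inner products, I would apply the polarization identity
\[ 2(A-Y)\DOT(B-Y) = |AY|^2 + |BY|^2 - |AB|^2 = |AY|^2 + |BY|^2 - c^2, \]
and the two cyclic counterparts involving $a^2$ and $b^2$. This rewrites each mixed term as a combination of $|AY|^2, |BY|^2, |CY|^2$ and a single side-length square.

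The final step is to collect coefficients. The side-length squares $a^2,b^2,c^2$ appear only from polarization, with the signs producing exactly the parenthesized subtraction in $(*)$. Meanwhile, the coefficient of $|AY|^2$ picks up contributions $\alpha^2$ (diagonal) and $\alpha\beta, \gamma\alpha$ (two mixed terms), totaling $\alpha(\alpha + \beta + \gamma) = \alpha$; the cyclic computation gives $\beta$ and $\gamma$ for $|BY|^2$ and $|CY|^2$. The only real pitfall is sign-tracking during polarization, so I would write out the $(A-Y)\DOT(B-Y)$ mixed term in full to verify that it contributes $-\alpha\beta c^2$, and invoke cyclic symmetry for the remaining two terms.
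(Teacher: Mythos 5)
Your proposal is correct and follows essentially the same route as the paper: decompose $X-Y$ as $\alpha(A-Y)+\beta(B-Y)+\gamma(C-Y)$ using $\alpha+\beta+\gamma=1$, expand the square, eliminate the cross terms via the polarization identity $2(A-Y)\DOT(B-Y)=|AY|^2+|BY|^2-c^2$, and collect coefficients using $\alpha(\alpha+\beta+\gamma)=\alpha$. No gaps.
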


\begin{proof} Using the common abbreviation $V^2 = V \DOT V$, we compute first that
\begin{align*}
{|XY|}^2 &= {(Y - X)}^2 \\
&= {(Y - \alpha A - \beta B - \gamma C)}^2 \\
&= {\{ \alpha(Y - A) + \beta(Y - B) + \gamma(Y - C) \}}^2 \\
&= \alpha^2 {|AY|}^2 + \beta^2{|BY|}^2 + \gamma^2 {|CY|}^2 \\
& \hspace{.5in} + 2 \alpha \beta \, (Y - A) \DOT (Y - B) + 2 \alpha \gamma \, (Y - A) \DOT (Y - C) \\
& \hspace{.5in} + 2 \beta \gamma \, (Y - B) \DOT (Y - C).
\end{align*}
On the other hand, we may compute $c^2$ as follows:
\[ {(B - A)}^2 = {\{(Y - A) - (Y - B)\}}^2 = {|AY|}^2 + {|BY|}^2 - 2 \, (Y - A) \DOT (Y - B). \]
Thus $2 \alpha \beta \, (Y - A) \DOT (Y - B) = \alpha \beta {|AY|}^2 + \alpha \beta {|BY|}^2 - \alpha \beta c^2$. Substituting this and its analogues into the preceding calculation, the total coefficient of ${|AY|}^2$ becomes $\alpha^2 + \alpha \beta + \alpha \gamma = \alpha(\alpha + \beta + \gamma) = \alpha$, e.g. The result is formula $(*)$.
\end{proof}

\section{Distances from $N$ to the Vertices}

\begin{lem}\label{Gbarys} The centroid $G$ has barycentric coordinates $(\frac{1}{3}, \frac{1}{3}, \frac{1}{3})$. \end{lem}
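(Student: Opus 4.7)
The plan is to exploit the uniqueness of barycentric coordinates established in Section 2, combined with Theorem \ref{XonBC}, to pin down $G$ as the common intersection of two medians.

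First I would apply Theorem \ref{XonBC} to each side-midpoint. Since $A'$ is the midpoint of $BC$, we have $|BA'| = |CA'| = a/2$, and the theorem yields barycentric coordinates $A' = (0, \tfrac{1}{2}, \tfrac{1}{2})$ with respect to $\triangle ABC$. Entirely analogous computations give $B' = (\tfrac{1}{2}, 0, \tfrac{1}{2})$ and $C' = (\tfrac{1}{2}, \tfrac{1}{2}, 0)$.

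Next, I would parametrize the median $AA'$ as $(1-t)A + tA'$, which in barycentric coordinates becomes $(1-t,\, t/2,\, t/2)$ as $t$ varies; similarly the median $BB'$ consists of points with coordinates $(s/2,\, 1-s,\, s/2)$. Setting the two triples equal and invoking the uniqueness of barycentric coordinates gives the system
\[ 1 - t = s/2, \qquad t/2 = 1 - s, \qquad t/2 = s/2. \]
The last equation forces $t = s$, and then either of the others gives $t = 2/3$. Hence the medians $AA'$ and $BB'$ intersect precisely at the point with barycentric coordinates $(\tfrac{1}{3}, \tfrac{1}{3}, \tfrac{1}{3})$.

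Finally, by the cyclic symmetry of the argument (relabel the vertices and repeat), the median $CC'$ must pass through this same point. Thus all three medians concur, and their common point $G$ has barycentric coordinates $(\tfrac{1}{3}, \tfrac{1}{3}, \tfrac{1}{3})$. I do not expect any real obstacle here: the barycentric formalism reduces the classical concurrence-of-medians argument to a two-line linear computation once the midpoints' coordinates are read off from Theorem \ref{XonBC}.
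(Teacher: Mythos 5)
Your proof is correct and follows essentially the same route as the paper: both use Theorem \ref{XonBC} to read off the midpoints' coordinates and then identify $G$ as the intersection of the medians $AA'$ and $BB'$ (the paper simply verifies that the candidate point $(\frac13,\frac13,\frac13)$ lies on both medians, whereas you solve the small linear system to derive it). One cosmetic caution: avoid using $s$ as your second parameter, since the paper reserves $s$ for the semiperimeter.
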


\begin{proof}
Let $G'$ be the point with barycentric coordinates $(\frac{1}{3}, \frac{1}{3}, \frac{1}{3})$, and we will prove $G=G'$. Let $A'$ and $B'$ be the midpoints of sides $BC$ and $AC$ respectively. By Theorem \ref{XonBC}, $A' = \half B + \half C$. Now we calculate \[ \mbox{$\frac{1}{3}$}A+\mbox{$\frac{2}{3}$}A'  = \mbox{$\frac{1}{3}$}A+\mbox{$\frac{2}{3}$}(\half B + \half C)=\mbox{$\frac{1}{3}$}A + \mbox{$\frac{1}{3}$}B + \mbox{$\frac{1}{3}$}C=G'\]
which implies that $G'$ is on segment $AA'$. Similarly, we find that $G'$ is on segment $BB'$. However the intersection of lines $AA'$ and $BB'$ is $G$, and so $G=G'$.
\end{proof}

\begin{lem}\label{EulerDivision}\emph{(Euler Line Theorem)}
$H-O=3(G-O)$
\end{lem}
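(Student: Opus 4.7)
The plan is to introduce the candidate point $H' := 3G - 2O$ (so that $H' - O = 3(G - O)$ holds by construction) and then verify that $H'$ coincides with the orthocenter $H$. Since the orthocenter is determined as the unique common point of the three altitudes, the identity will follow.

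By Lemma \ref{Gbarys}, $3G = A + B + C$, so $H' = A + B + C - 2O$. To show $H'$ is the orthocenter, it suffices to check that $H'$ lies on the altitude from each vertex; by the cyclic symmetry in the roles of $A,B,C$, I would only write out the argument for $A$, namely that $(H' - A) \DOT (B - C) = 0$. The key algebraic trick is to shift to $O$ as origin by writing
\[ H' - A = (B - O) + (C - O), \qquad B - C = (B - O) - (C - O). \]
The dot product then telescopes:
\[ (H' - A) \DOT (B - C) = {|B - O|}^2 - {|C - O|}^2 = R^2 - R^2 = 0, \]
using only the defining property $|OA| = |OB| = |OC| = R$ of the circumcenter. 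Cyclic relabeling handles the altitudes from $B$ and $C$ identically.

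The main obstacle, such as it is, is spotting the rewriting of $H' - A$ and $B - C$ as a sum and difference of the vectors $B - O$ and $C - O$. Once this is in hand, the equidistance of $O$ from the vertices kills the inner product immediately, and all three altitudes are dispatched by the same calculation.
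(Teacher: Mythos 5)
Your proposal is correct and is essentially identical to the paper's proof: both introduce $H' = A + B + C - 2O$ via Lemma \ref{Gbarys}, rewrite $H'-A$ and $B-C$ as the sum and difference of $B-O$ and $C-O$, and use $|OB|=|OC|=R$ to make the dot product vanish. The only cosmetic difference is that the paper explicitly notes that checking two altitudes suffices, since $H$ is defined as their intersection.
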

\begin{proof}
Let $H'=O+3(G-O)$ and we will prove $H=H'$. By Lemma \ref{Gbarys}, \[H'-O=3(G-O)=A+B+C-3O=(A-O)+(B-O)+(C-O).\] We use this to calculate 
\begin{align*}
(H'-A)\DOT(B-C) &= \{(H'-O)-(A-O)\}\DOT\{(B-O)- (C-O)\} \\
&= \{(B-O)+(C-O)\}\DOT\{(B-O)-(C-O)\} \\
&= |BO|^2-|CO|^2 \\
&= 0
\end{align*}
Therefore $H'$ is on the altitude from $A$ to $BC$. Similarly, $H'$ is on the altitude from $B$ to $AC$, but since $H$ is defined to be the intersection of the altitudes, it follows that $H=H'$.
\end{proof}

\begin{lem}\label{AOdotBO} $(A - O) \DOT (B - O) = R^2 - \half c^2$. \end{lem}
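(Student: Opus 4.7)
The plan is to exploit the defining property of the circumcenter, namely that $|A-O| = |B-O| = R$, together with the elementary identity relating the dot product to the squared norm of a difference. The whole proof reduces to one application of the polarization-style expansion already used in the proof of Theorem~\ref{distance}.

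First I would write $B - A = (B-O) - (A-O)$ and square both sides, obtaining
\[ c^2 = |B-A|^2 = |A-O|^2 + |B-O|^2 - 2(A-O)\DOT(B-O). \]
Then I would substitute $|A-O|^2 = |B-O|^2 = R^2$ (which holds by definition of $R$ as the circumradius) to get $c^2 = 2R^2 - 2(A-O)\DOT(B-O)$, and solve for the dot product.

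There is no real obstacle here; the statement is essentially the law of cosines applied to $\triangle AOB$, where the two equal sides have length $R$ and the opposite side is $c$. The only thing to keep straight is the sign, which comes out correctly because the cross term in the expansion of $|(A-O)-(B-O)|^2$ carries a factor of $-2$.
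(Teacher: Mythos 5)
Your proposal is correct and is essentially identical to the paper's own proof: both expand $c^2 = \{(A-O)-(B-O)\}^2$ and substitute $|OA|^2 = |OB|^2 = R^2$ to isolate the dot product. The only cosmetic difference is that you write $B-A$ where the paper writes $A-B$, which is immaterial after squaring.
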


\begin{proof}
One has
\begin{align*} c^2 &= {(A - B)}^2 \\
&= {\{(A - O) - (B - O)\}}^2 \\
&= {|OA|}^2 + {|OB|}^2 - 2 \, (A - O) \DOT (B - O) \\
& = 2R^2 - 2 \, (A - O) \DOT (B - O). \qedhere \end{align*}
\end{proof}

We now calculate $|AN|$, $|BN|$, $|CN|$, which are needed in Theorem \ref{distance}.

\begin{thm}\label{distanceAN} $4{|AN|}^2 = R^2-a^2+b^2+c^2$. \end{thm}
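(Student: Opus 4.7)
The plan is to express $N - A$ in terms of the three ``circumradius vectors'' $A - O, B - O, C - O$, square, and apply Lemma \ref{AOdotBO} to each of the three resulting dot products.

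First I would pin down $N$ as an affine combination of $A, B, C, O$. Since $N$ is defined as the midpoint of $OH$, one has $2N = O + H$. By Lemma \ref{EulerDivision} combined with Lemma \ref{Gbarys}, $H - O = 3(G - O) = (A - O) + (B - O) + (C - O)$, so $H = A + B + C - 2O$, and therefore $2N = A + B + C - O$. Subtracting $2A$ from both sides gives
\[ 2(N - A) = -(A - O) + (B - O) + (C - O), \]
which is the key identity.

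Squaring both sides, I would expand the right-hand side into three squared terms and three cross terms:
\begin{align*}
4|AN|^2 &= |A-O|^2 + |B-O|^2 + |C-O|^2 \\
& \quad - 2(A-O)\DOT(B-O) - 2(A-O)\DOT(C-O) + 2(B-O)\DOT(C-O).
\end{align*}
The three squared terms each equal $R^2$, and Lemma \ref{AOdotBO} (applied to each pair of vertices) gives $(A-O)\DOT(B-O) = R^2 - \half c^2$, and similarly with $b^2$ and $a^2$ for the other two pairs. Substituting and collecting, the $R^2$ contributions telescope to a single $R^2$, while the side-length contributions yield $+c^2 + b^2 - a^2$, producing $4|AN|^2 = R^2 - a^2 + b^2 + c^2$ as claimed.

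There is no real obstacle; the only place to be careful is the sign pattern in the identity for $2(N - A)$, since one vector (the one attached to vertex $A$) enters with a minus sign, which in turn flips the sign of $a^2$ (the side opposite $A$) in the final answer. This sign bookkeeping is what makes the formula asymmetric in $a, b, c$ and matches the statement of the theorem.
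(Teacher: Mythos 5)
Your proposal is correct and follows essentially the same route as the paper: both express $2(N-A)$ (up to sign) as $-(A-O)+(B-O)+(C-O)$ via the Euler line lemma, square, and evaluate the cross terms with Lemma \ref{AOdotBO}. The computation and sign bookkeeping all check out.
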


\begin{proof}
Since $N$ is the midpoint of $OH$, we have $H - O = 2(N - O)$. Combining this observation with Lemma \ref{EulerDivision}, and using Lemma \ref{AOdotBO}, we obtain
\begin{align*} 4{|AN|}^2 &= {\{2(A - O) - 2(N - O)\}}^2 \\
&= {\{ (A - O) - (B - O) - (C - O)\}}^2 \\
&= {|AO|}^2 + {|BO|}^2 + {|CO|}^2 \\
& \hspace{.3in} - 2 \, (A - O) \DOT (B - O) - 2 \, (A - O) \DOT (C - O) \\
& \hspace{.3in} + 2 \, (B - O) \DOT (C - O) \\
&= 3R^2 - 2(R^2 - \half c^2) - 2(R^2 - \half b^2) + 2(R^2 - \half a^2) \\
&= R^2 - a^2 + b^2 + c^2. \qedhere  \end{align*}
\end{proof}

\section{Proof of Feuerbach's Theorem}

\begin{thm}\label{Ibarys} The incenter $I$ has barycentric coordinates $(a/2s, b/2s, c/2s)$. \end{thm}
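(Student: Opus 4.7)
The plan mirrors the proofs of Lemmas \ref{Gbarys} and \ref{EulerDivision}: let $I'$ denote the point with barycentric coordinates $(a/2s,\, b/2s,\, c/2s)$, and prove $I = I'$ by showing that $I'$ lies on two of the three internal angle bisectors.

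First I would locate the point $D$ where the internal angle bisector from $A$ meets side $BC$. Since $D$ lies on this bisector, its perpendicular distances to lines $AB$ and $AC$ are a common value $h$. Computing the areas of $\triangle ABD$ and $\triangle ACD$ two different ways---once as $\frac{1}{2} c h$ and $\frac{1}{2} b h$, and once as $\frac{1}{2}|BD|\, h_A$ and $\frac{1}{2}|CD|\, h_A$, where $h_A$ is the common altitude from $A$ to line $BC$---gives $|BD|/|CD| = c/b$. Combined with $|BD| + |CD| = a$, this yields $|BD| = ac/(b+c)$ and $|CD| = ab/(b+c)$. Theorem \ref{XonBC} then supplies the barycentrics of $D$, namely $(0,\, b/(b+c),\, c/(b+c))$.

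Next, observing that $1 - a/2s = (b+c)/2s$, I would verify directly that
\[ \frac{a}{2s}\, A + \frac{b+c}{2s}\, D \;=\; \frac{a}{2s}\, A + \frac{b}{2s}\, B + \frac{c}{2s}\, C \;=\; I'. \]
The coefficients on the left are positive and sum to $1$, so $I'$ lies on segment $AD$, i.e., on the internal angle bisector from $A$. By a completely symmetric argument, $I'$ lies on the internal angle bisector from $B$. Since $I$ is defined as the intersection of these two lines, $I = I'$.

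No serious obstacle arises: the argument parallels Lemma \ref{Gbarys} exactly, with the standard angle-bisector identity $|BD|/|CD| = c/b$ (derived in one line from the two area computations above) playing the role that the midpoint formula played there.
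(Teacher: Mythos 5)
Your proposal is correct and follows essentially the same route as the paper: define $I'$ with the claimed coordinates, compute the barycentrics of the foot of the bisector from $A$ via Theorem \ref{XonBC}, exhibit $I'$ as a convex combination of $A$ and that foot, and conclude by symmetry and the definition of $I$ as the intersection of two bisectors. The only difference is cosmetic---you derive the angle-bisector ratio $|BD|/|CD| = c/b$ by comparing areas, whereas the paper gets it from the Law of Sines; both are one-line derivations of the same standard fact.
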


\begin{proof}
Let $I'$ be the point with barycentric coordinates  $(a/2s, b/2s, c/2s)$, and we will prove $I=I'$. Let $F$ be the foot of the bisector of $\angle A$ on side $BC$. Applying the Law of Sines to $\triangle ABF$ and $\triangle ACF$, and using $\sin(\pi - x) = \sin x$, we find that
\[ \frac{|BF|}{c} = \frac{\sin(\angle BAF)}{\sin(\angle BFA)} = \frac{\sin(\angle CAF)}{\sin(\angle CFA)} = \frac{|CF|}{b}. \]
The equations $b|BF| = c|CF|$ and $|BF| + |CF| = a$ jointly imply that $|BF| = ac/(b + c)$. By Theorem \ref{XonBC}, $F = (1 - t)B + tC$, where $t = |BF|/a = c/(b + c)$. Now, \[\mbox{$\frac{b+c}{2s}$}F+\mbox{$\frac{a}{2s}$}A=\mbox{$\frac{b+c}{2s}$}(\mbox{$\frac{b}{b+c}$}B+\mbox{$\frac{c}{b+c}$}C)+\mbox{$\frac{a}{2s}$}A=\mbox{$\frac{a}{2s}$}A+\mbox{$\frac{b}{2s}$}B+\mbox{$\frac{c}{2s}$}C=I'\] which implies that $I'$ is on the angle bisector of $\angle A$. Similarly, $I'$ is on the angle bisector of $\angle B$, but since $I$ is the intersection of these two lines, this implies $I=I'$.
\end{proof}

We are now in a position to prove Feuerbach's Theorem.

\begin{thm}[Feuerbach, 1822]\label{FT} In a nonequilateral triangle, the nine-point circle is internally tangent to the incircle and externally tangent to the three excircles. (For historical details, see {\em \cite{F}} and {\em \cite{M}}.) \end{thm}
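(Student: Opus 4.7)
The plan is to reduce Feuerbach's Theorem to distance identities and then verify those using the results of Sections 2 and 3. Two circles with centers $P_1, P_2$ and radii $\rho_1, \rho_2$ are internally tangent iff $|P_1P_2| = |\rho_1 - \rho_2|$, and externally tangent iff $|P_1P_2| = \rho_1 + \rho_2$; in either case $|P_1P_2|^2 = (\rho_1 \mp \rho_2)^2$. It therefore suffices to establish
\[ |NI|^2 = \bigl(\tfrac{R}{2} - r\bigr)^2 \quad\text{and}\quad |NE_a|^2 = \bigl(\tfrac{R}{2} + r_a\bigr)^2, \]
together with the obvious analogs for $E_b$ and $E_c$.

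A preliminary step identifies the nine-point radius as $R/2$. Writing $A' = (B+C)/2$ and combining $N = (O+H)/2$ with $H - O = A + B + C - 3O$ (from Lemmas \ref{Gbarys} and \ref{EulerDivision}), a direct calculation yields $A' - N = (O-A)/2$, so $|NA'| = R/2$.

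The core step is to compute $|NI|^2$ via Theorem \ref{distance}, applied with $X = I$ and $Y = N$. Theorem \ref{Ibarys} supplies the barycentric weights $(a/2s, b/2s, c/2s)$, while Theorem \ref{distanceAN} and its two cyclic analogs furnish $|AN|^2, |BN|^2, |CN|^2$. Substituting and invoking the standard area identities $K = rs$, $K = abc/(4R)$, and $K^2 = s(s-a)(s-b)(s-c)$ (which rearrange to $Rr = abc/(4s)$ and $r^2 = (s-a)(s-b)(s-c)/s$) should collapse the expression to $R^2/4 - Rr + r^2$. For the excircles, a Theorem \ref{Ibarys}-style argument---using the external angle bisectors of $\angle B$ and $\angle C$ in place of the internal bisector of $\angle A$---first yields the barycentric coordinates $(-a,\, b,\, c)/(2(s-a))$ of $E_a$; Theorem \ref{distance} is then reapplied with $K = r_a(s-a)$ replacing $K = rs$, giving $|NE_a|^2 = R^2/4 + Rr_a + r_a^2$.

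The main obstacle is the algebraic simplification. The raw expansion of $|NI|^2$ yields a mixture of terms in $R^2$, $s^2$, $a^2+b^2+c^2$, and $abc/s$; the cancellations that compress it into $R^2/4 - Rr + r^2$ hinge on the identity $(s-a)(s-b)(s-c) = s^3 - \tfrac{s}{2}(a^2+b^2+c^2) - abc$, itself a consequence of $2s = a+b+c$. The excircle calculation has the same overall shape up to a handful of sign changes produced by the negative weight $-a/(2(s-a))$, so no fundamentally new idea is required once the incircle computation is in hand, and the nonequilateral hypothesis enters only to ensure that the incircle and nine-point circle do not coincide.
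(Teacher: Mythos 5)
Your proposal is correct and follows essentially the same route as the paper: reduce tangency to $|IN| = |\tfrac12 R - r|$ and $|E_aN| = \tfrac12 R + r_a$, then evaluate $|IN|^2$ via Theorem~\ref{distance} with the weights from Theorem~\ref{Ibarys} and the distances from Theorem~\ref{distanceAN}, collapsing the result with $K = rs = abc/4R$ and Heron's formula (your identity for $(s-a)(s-b)(s-c)$ is just the expanded form of the factorization the paper uses). Your explicit vector verification that $A' - N = (O-A)/2$, hence that the nine-point radius is $R/2$, is a small welcome addition; the paper merely cites the midpoint triangle.
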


\begin{proof} Consider the incircle. From elementary geometry, two nonconcentric circles are internally tangent if and only if the distance between their centers is equal to the absolute difference of their radii. Therefore we must prove that $|IN| = |\half R - r|$. Here, $\half R$ is the radius of the nine-point circle, as the latter is the circumcircle of the midpoint-triangle $\triangle A'B'C'$. We set $X = I$ and $Y = N$ in Theorem \ref{distance}, with Theorems \ref{distanceAN} and \ref{Ibarys} supplying the distances $|AN|$, $|BN|$, $|CN|$, and the barycentric coordinates of $I$. For brevity, we use {\em cyclic sums}, in which the displayed term is transformed under the permutations $(a,b,c)$, $(b,c,a)$, and $(c,a,b)$, and the results are summed (thus, symmetric functions of $a,b,c$ may be factored through the summation sign, and $\sum_{\circlearrowright} a = a + b + c = 2s$). The following computation results:
\begin{align*}
{|IN|}^2 &= \cycsum \bigg(\frac{a}{2s}\bigg) \frac{R^2 - a^2 + b^2 + c^2}{4}  \, - \cycsum  \bigg(\frac{b}{2s} \cdot \frac{c}{2s} \bigg) a^2 \\
&= \frac{R^2}{8s} \bigg[ \cycsum a \bigg] + \frac{1}{8s} \bigg[ \cycsum (-a^3 + ab^2 + ac^2) \bigg] - \frac{abc}{(2s)^2} \bigg[ \cycsum a \bigg] \\
&= \frac{R^2}{4} + \frac{(- a + b + c)(a - b + c)(a + b - c) + 2abc}{8s} - \frac{abc}{2s} \\
&= \frac{R^2}{4} + \frac{(2s - 2a)(2s - 2b)(2s - 2c)}{8s} - \frac{abc}{4s} \\
&= \frac{R^2}{4} + \frac{(K^2/s)}{s} - \frac{4RK}{4s} \\
& = \left(\half R\right)^2 + r^2 - Rr \\
& = \left(\half R - r\right)^2.
\end{align*}
The two penultimate steps use the area formulas of Section 1---in particular, $K = rs = abc/4R$ and $K^2 = s(s - a)(s - b)(s - c)$. A similar calculation applies to the $A$-excircle, with two modifications: (i) $E_a$ has barycentric coordinates
\[ \left( \frac{-a}{2(s - a)}, \frac{b}{2(s - a)}, \frac{c}{2(s - a)} \right), \]
and (ii) in lieu of $K = rs$, one uses $K = r_a(s - a)$. The result, $|E_a N| = \half R + r_a$, means that the nine-point circle and the $A$-excircle are externally tangent.
\end{proof}

\section{Acknowledgements} 

The author wishes to thank his teacher, Mr.\ Joseph Stern, for offering numerous helpful suggestions and comments during the writing of this note.

\bigskip

\noindent {\bf Michael Scheer} is a secondary student now entering his senior year at Stuyvesant High School, the technical academy in New York City's TriBeCa district. \\
\texttt{mscheer@stuy.edu}
\end{document}